\crefname{lemma}{Lemma}{Lemmas}
\crefname{theorem}{Theorem}{Theorems}
\def\@settitle{\begin{center}%
  \baselineskip14\p@\relax
  \bfseries
  \uppercasenonmath\@title
  \@title
  \ifx\@subtitle\@empty\else
     \\[1ex]\uppercasenonmath\@subtitle
     \footnotesize\mdseries\@subtitle
  \fi
  \end{center}%
}
\def\subtitle#1{\gdef\@subtitle{#1}}
\def\@subtitle{}
\newtheorem{theorem}{Theorem}[section]
\newtheorem{corollary}[theorem]{Corollary}
\newtheorem{proposition}[theorem]{Proposition}
\theoremstyle{remark}
\newtheorem{remark}[theorem]{Remark}
\begin{document}

\title{Ping-pong in the projective plane over a nonarchimedean field}

\author{Sami Douba}
\address{Mathematisches Institut der Universit\"at Bonn, Endenicher Allee 60, 53115 Bonn, Germany}
\email{douba@math.uni-bonn.de}

\author{Dmitry Kubrak}
\address{Universit\'e Paris-Saclay, CNRS, Laboratoire de math\'ematiques d’Orsay, 91405 Orsay, France}
\email{dmkubrak@gmail.com}

\author{Konstantinos Tsouvalas}
\address{Max Planck Institute for Mathematics in the Sciences, Inselstraße 22, 04103 Leipzig, Germany}
\email{konstantinos.tsouvalas@mis.mpg.de}

\begin{abstract}
We show that any lattice in $\mathrm{SL}_3(k)$, where $k$ is a nonarchimedean local field, contains an undistorted subgroup isomorphic to the free product $\mathbb{Z}^2*\mathbb{Z}$. To our knowledge, the subgroups we construct give the first examples in the literature of  finitely generated discrete subgroups of nonarchimedean Lie groups that are not virtually isomorphic to lattices in such Lie groups.
Our result is in contrast to the case of $\mathrm{SL}_3(\mathbb{Z})$, in which the existence of a $\mathbb{Z}^2*\mathbb{Z}$ subgroup remains open.
\end{abstract}

\subjclass{11F06, 22E40}

\maketitle

Denote by $k$ a nonarchimedean local field.  The purpose of this note is to establish the following.

\begin{theorem}\label{thm:main}
Let $\Lambda$ be a lattice in $\mathrm{SL}_3(k)$, and let $\Delta'$ be a $\mathbb{Z}^2$ subgroup of $\Lambda$. Then there is a finite-index subgroup $\Delta$ of $\Delta'$ and an infinite-order element $g \in \Lambda$ such that the subgroup $\langle \Delta, g \rangle < \Lambda$ is undistorted and decomposes as the free product $\Delta * \langle g \rangle$.
\end{theorem}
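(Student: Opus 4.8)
The plan is to realize $\langle \Delta, g \rangle$ as a Schottky-type ping-pong subgroup for the action of $\mathrm{SL}_3(k)$ on the projective plane $\mathbb{P}^2(k)$ together with its dual $\mathbb{P}^{2*}(k)$, i.e.\ on the variety $\mathcal{F}$ of full flags. The first step is to normalize $\Delta'$. Each nontrivial element of $\Delta'$ has infinite order and lies in the discrete group $\Lambda$; since the vertex stabilizers for the action of $\mathrm{SL}_3(k)$ on its Bruhat--Tits building $X$ are compact, no such element can fix a vertex, so every nontrivial element of $\Delta'$ acts as a hyperbolic isometry of $X$. As $\Delta' \cong \mathbb{Z}^2$ acts properly and semisimply on the complete $\mathrm{CAT}(0)$ space $X$, the flat torus theorem yields a $\Delta'$-invariant flat $\mathbb{E}^2 \subseteq X$ on which $\Delta'$ acts cocompactly by translations; since $X$ is two-dimensional this flat is an apartment. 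After conjugating and passing to a finite-index subgroup to kill the finite Weyl part, I may assume $\Delta'$ lies in the diagonal torus $T(k)$, hence fixes the three coordinate points and three coordinate lines, equivalently the six coordinate flags forming the boundary hexagon of the apartment. Writing $a = \mathrm{diag}(\alpha,\beta,\gamma) \in \Delta'$, its translation vector $(\mathrm{val}_\pi\alpha,\mathrm{val}_\pi\beta,\mathrm{val}_\pi\gamma)$ lies in $\{x+y+z=0\}$, and cocompactness says these vectors form a full-rank lattice $L$.

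Next I would produce $g$. A lattice in $\mathrm{SL}_3(k)$ is Zariski dense, so by the standard existence results for proximal elements in Zariski-dense groups it contains a biproximal element $g_0$ (eigenvalues of pairwise distinct absolute value), with attracting/repelling data $(x^+,H^-)$ and $(x^-,H^+)$ on $\mathbb{P}^2$. Arranging in addition that $x^\pm$ lie off the coordinate cage and that $H^\pm$ are transverse to it is a nonempty Zariski-open condition, hence attainable by density; so I may take $g_0$ with attracting and repelling flags in general position with respect to the coordinate frame of $\Delta'$.

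The heart of the argument is a uniform ping-pong on $\mathcal{F}$. Let $B^{\pm}$ be small neighborhoods of the attracting and repelling flags of $g_0$, set $B := B^+ \cup B^-$, and let $W$ be a thin neighborhood of the coordinate hexagon. The crucial estimate concerns the abelian factor: for diagonal $a$ with translation vector $v$, a flag in the bounded generic region $B$ is carried by $a$ to within distance $\lesssim |\pi|^{\,\mathrm{spread}(v)-C}$ of the hexagon, where $\mathrm{spread}(v)=\max_i v_i-\min_i v_i$ is the $A_2$-norm and $C$ absorbs the bounded geometry of $B$. Since $\mathrm{spread}$ is a norm on $L$, replacing $\Delta'$ by a finite-index subgroup $\Delta$ (scaling $L$) makes $\min_{v \in L\setminus 0}\mathrm{spread}(v)$ as large as desired, so that \emph{every} nontrivial $a \in \Delta$ maps $B$ into an arbitrarily thin $W$. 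On $\mathcal{F}$ the repelling locus of $g_0$ is a proper subvariety transverse to the hexagon, so a sufficiently thin $W$ avoids it — this is precisely where passing from $\mathbb{P}^2$ alone to flags is essential, since two lines in $\mathbb{P}^2$ always meet — whence, after replacing $g_0$ by a high power $g$, each $g^{\pm n}$ maps $W$ into $B^{\pm}$. With $W \cap B = \emptyset$, the free-product form of the ping-pong lemma gives $\langle \Delta, g \rangle = \Delta * \langle g \rangle$.

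For undistortion, the ping-pong regions are nested along the Bass--Serre tree of the free product, and I would upgrade the dynamical contraction above to a quantitative statement: tracking a base flag (or a base point of $X$) through a reduced word $a_0 g^{n_1} a_1 \cdots g^{n_r} a_r$, the successive syllables push it through disjoint nested regions without cancellation, so the displacement in $X$ is bounded below by a linear function of $\sum_i(\|v_{a_i}\| + |n_i|)$. Since $\Delta$ is undistorted — it acts cocompactly on the isometrically embedded apartment, so word length in $\mathbb{Z}^2$ is $\asymp \|v\|$ and $\asymp$ translation length in $X$ — and $\langle g \rangle$ is undistorted, this makes the orbit map $\Delta * \langle g \rangle \to X$ a quasi-isometric embedding, i.e.\ the subgroup is undistorted in $\Lambda$. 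I expect the main obstacle to be exactly this quantitative step and the uniform contraction feeding it: one must control the entire infinite family of contractions $\{a : a \in \Delta\}$ simultaneously (forcing the passage to large spread) and then rule out cancellation in $X$ across the junctions between the flat pieces and the $g$-axis pieces of a normal-form word. By comparison, producing $g$ inside the prescribed lattice is soft, using only Zariski density and genericity.
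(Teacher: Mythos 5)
Your overall architecture is the same as the paper's: normalize $\Delta'$ to a diagonal subgroup via its action on an apartment of the Bruhat--Tits building, find a regular (biproximal) element of $\Lambda$ in general position, play ping-pong, and get undistortion from quantitative norm estimates. But the step you yourself identify as the heart of the argument contains a genuine gap, and it is exactly the crux of the whole problem. You take $W$ to be a thin neighborhood of the coordinate ``hexagon'' in the flag variety $\mathcal{F}$ and assert that, since the repelling locus of $g_0$ is a proper subvariety transverse to the hexagon, a sufficiently thin $W$ avoids it. This is false: the set of flags non-transverse to the repelling flag $(x^-,L^-)$ is the union of the two Schubert surfaces $\{(y,M): y\in L^-\}$ and $\{(y,M): x^-\in M\}$, and the hexagon contains, for each coordinate point $e_i$, \emph{every} flag $(e_i,M)$ with $e_i\in M$ --- in particular the flag $(e_i,\langle e_i,x^-\rangle)$, which is non-transverse to $(x^-,L^-)$ no matter how generically $g_0$ is chosen (dually, $(L_{ij}\cap L^-, L_{ij})$ is another forced intersection point for each coordinate line $L_{ij}$). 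Transversality makes this intersection finite, not empty, so every neighborhood $W$ of the hexagon contains flags arbitrarily close to the repelling locus, and no power of $g_0$ maps $W$ into $B^+$. Passing from $\mathbb{P}(k^3)$ to $\mathcal{F}$ reduces the dimension of the forced incidence to zero but does not remove it, so the ping-pong as set up does not close. (Your finite-index passage to make the minimal spread large addresses only the closeness of $aB$ to the hexagon, not this incidence problem.)

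What is actually needed --- and what the paper proves --- is that the images $\gamma\,\mathcal{U}$, for \emph{all} nontrivial $\gamma\in\Delta$ simultaneously, accumulate on the coordinate configuration only inside certain residue discs, which can be arranged to be disjoint from the locus where the attracting and repelling lines of $g$ cross that configuration. Concretely, after replacing the generators by their $p(q-1)$-st powers, every relevant eigenvalue ratio lies in $\pi^{p\mathbb{Z}}(1+\pi\mathfrak{m})$, and a direct computation shows that the slope of any line joining a point of $\gamma\,\mathcal{U}$ to a point of $\mathcal{U}$ has $\pi$-adic valuation different from $1$; one then chooses $g$ with attracting and repelling lines of slope in $\pi+\pi\mathfrak{m}$ through points of $\mathcal{U}$, so that $\gamma\,\mathcal{U}$ avoids an entire neighborhood $\mathcal{V}_{x^\pm}$ of $L^\pm$ in $\mathbb{P}(k^3)$. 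This disjointness is an ultrametric phenomenon (the set of slopes of valuation $\neq 1$ is clopen) with no archimedean analogue, consistent with the Dey--Hurtado obstruction showing that no ping-pong of your proposed form can exist in $\mathrm{SL}_3(\mathbb{Z})$. Your proposal, which uses the nonarchimedean structure only in soft ways, is missing precisely this ingredient.
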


\begin{remark}\label{rem:z2}
We remark that any lattice $\Lambda$ in $\mathrm{SL}_3(k)$ contains a $\mathbb{Z}^2$ subgroup. Indeed, by Margulis's arithmeticity theorem \cite{MR1090825}, any such $\Lambda$ is arithmetic, so that the existence of a $\mathbb{Z}^2$ subgroup of $\Lambda$ follows from \cite[Thm.~1(ii)]{MR1866442}.
\end{remark}

The subgroups we construct in the proof of Theorem \ref{thm:main} provide examples of finitely generated discrete subgroups of (the $k$-points of) a $k$-group, where $k$ is a nonarchimedean local field, that are not virtually isomorphic to a lattice in a $k'$-group for any local field $k'$ (see Remark \ref{rem:lattice}). To our knowledge, these are the first such examples in the literature. 

Consider the ring $\mathbb{F}_q[t]$ of polynomials with indeterminate $t$ over the finite field~$\mathbb{F}_q$ of order $q$, and let $\mathbb{F}_q((1/t))$ denote the completion of the function field $\mathbb{F}_q(t)$ with respect to the ``valuation at infinity.'' Since, by~\cite{zbMATH03382639}, the discrete subgroup $\mathrm{SL}_3(\mathbb{F}_q[t])$ of $\mathrm{SL}_3(\mathbb{F}_q((1/t)))$ is of finite covolume, one concludes from Theorem \ref{thm:main} (and Remark~\ref{rem:z2}) the following.

\begin{corollary}
There is a subgroup of $\mathrm{SL}_3(\mathbb{F}_q[t])$ isomorphic to $\mathbb{Z}^2*\mathbb{Z}$. 
\end{corollary}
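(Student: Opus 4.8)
The plan is to deduce the statement directly from Theorem~\ref{thm:main}, applied to the specific lattice $\Lambda = \mathrm{SL}_3(\mathbb{F}_q[t])$. First I would confirm that the ambient field is of the required type: the completion $\mathbb{F}_q((1/t))$ of the function field $\mathbb{F}_q(t)$ at the valuation at infinity is a nonarchimedean local field (its residue field is $\mathbb{F}_q$, and $1/t$ serves as a uniformizer), so Theorem~\ref{thm:main} is applicable to discrete subgroups of $\mathrm{SL}_3(\mathbb{F}_q((1/t)))$.

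Next I would verify the hypotheses of the theorem. I would invoke the standard fact from reduction theory for arithmetic groups over global function fields — as recorded in the sentence preceding the statement — that $\mathrm{SL}_3(\mathbb{F}_q[t])$ embeds as a lattice in $\mathrm{SL}_3(\mathbb{F}_q((1/t)))$. By Remark~\ref{rem:z2}, this lattice contains a subgroup $\Delta'$ isomorphic to $\mathbb{Z}^2$. Feeding $\Lambda$ and $\Delta'$ into Theorem~\ref{thm:main} then yields a finite-index subgroup $\Delta < \Delta'$ together with an infinite-order element $g \in \Lambda$ for which $\langle \Delta, g \rangle$ decomposes as the free product $\Delta * \langle g \rangle$.

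It remains only to identify the isomorphism type of this free product. Since $\Delta$ has finite index in $\Delta' \cong \mathbb{Z}^2$, and every finite-index subgroup of $\mathbb{Z}^2$ is again free abelian of rank $2$, we have $\Delta \cong \mathbb{Z}^2$; and since $g$ has infinite order, $\langle g \rangle \cong \mathbb{Z}$. Therefore $\langle \Delta, g \rangle \cong \mathbb{Z}^2 * \mathbb{Z}$, exhibiting the desired subgroup of $\mathrm{SL}_3(\mathbb{F}_q[t])$.

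I do not anticipate any genuine obstacle in this argument, as essentially the whole content of the corollary is already packaged into Theorem~\ref{thm:main}. The one point requiring input external to the statement is the assertion that $\mathrm{SL}_3(\mathbb{F}_q[t])$ is a lattice in $\mathrm{SL}_3(\mathbb{F}_q((1/t)))$, which is classical; everything else is a formal consequence of the theorem together with the elementary observation on finite-index subgroups of $\mathbb{Z}^2$.
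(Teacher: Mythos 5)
Your proposal is correct and follows exactly the paper's own (one-sentence) derivation: the classical fact that $\mathrm{SL}_3(\mathbb{F}_q[t])$ is a lattice in $\mathrm{SL}_3(\mathbb{F}_q((1/t)))$, Remark~\ref{rem:z2} for the existence of a $\mathbb{Z}^2$ subgroup, and Theorem~\ref{thm:main} to produce the free product. Your added observation that a finite-index subgroup of $\mathbb{Z}^2$ is again $\mathbb{Z}^2$ is the only detail the paper leaves implicit, and it is handled correctly.
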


This is in contrast to $\mathrm{SL}_3(\mathbb{Z})$, for which the existence of a $\mathbb{Z}^2*\mathbb{Z}$ subgroup remains open (see \cite[Prob.~3.3]{MR4556437}). By recent work of Dey and Hurtado \cite[Thm~1.4]{dey2024remarks}, a hypothetical $\mathbb{Z}^2*\mathbb{Z}$ subgroup of $\mathrm{SL}_3(\mathbb{Z})$ would necessarily act minimally on $\mathbb{P}(\mathbb{R}^3)$ (in fact, on the Furstenberg boundary of $\mathrm{SL}_3(\mathbb{R})$), so that no ping-pong argument of the form we use to establish Theorem \ref{thm:main} will apply to $\mathrm{SL}_3(\mathbb{Z})$. On the other hand, Soifer \cite{MR2999365} demonstrated the existence of discrete copies of $\mathbb{Z}^2 * \mathbb{Z}$ in $\mathrm{SL}_3(\mathbb{R})$, and indeed, it follows from Soifer's argument that many lattices in $\mathrm{SL}_3(\mathbb{R})$ contain $\mathbb{Z}^2 * \mathbb{Z}$ subgroups.

Before proceeding to the proof of Theorem \ref{thm:main}, we introduce some more notation and terminology. Let $\mathcal{O} \coloneqq \{\alpha \in k \> ; \> |\alpha| \leq 1\}$ and $\mathfrak{m} \coloneqq \{\alpha \in k \> ; \> |\alpha| < 1\}$, and let $\pi \in \mathfrak{m}$ be a uniformizer of $k$, i.e., a generator of the ideal $\mathfrak{m}$ in~$\mathcal{O}$. Let $\mathrm{val}_\pi : k^\times \rightarrow \mathbb{Z}$ be the normalized valuation of $k$ (so $\mathrm{val}_\pi(\pi) = 1$). Finally, let $p$ (respectively, $q$) be the characteristic (resp., order) of the residue class field~$\mathcal{O}/\mathfrak{m}$ of~$k$.

We denote by $\mathcal{F}$ the Furstenberg boundary of $\mathrm{SL}_3(k)$, viewed as the space of projective flags $(x,L)$, where $x \in \mathbb{P}(k^3)$ and $L$ is a projective line through $x$. We say an element $g \in \mathrm{SL}_3(k)$ is {\em regular} if $g$ is diagonalizable over $k$ and the absolute values of the eigenvalues of $g$ are all distinct. In this case, there is a unique flag $(x^+, L^+) \in \mathcal{F}$ (respectively, $(x^-, L^-) \in \mathcal{F}$), called the {\em attracting flag} (resp., {\em repelling flag}) of~$g$, such that~$g^n$ converges uniformly to the constant function $(x^\pm, L^\pm)$ on compact subsets of the set of all flags in $\mathcal{F}$ transverse to $(x^\mp, L^\mp)$ as $n \rightarrow \pm \infty$.

\begin{proof}[Proof~of~Theorem~\ref{thm:main}]
Since any discrete $\mathbb{Z}^2$ subgroup of $\mathrm{SL}_3(k)$ preserves and acts cellularly on an apartment in the Bruhat--Tits building of $\mathrm{SL}_3(k)$ (see \cite[Exercise~II.6.6(2)~and~Thm.~II.7.1]{bridson1999metric}), up to conjugating $\Lambda$ within $\mathrm{SL}_3(k)$, we have that a finite-index subgroup $\Delta$ of $\Delta'$ is generated by matrices 
\[
a := \begin{pmatrix} a_1 & & \\ & a_2& \\ & & a_3 \end{pmatrix},\>  b:= \begin{pmatrix} b_1 & & \\ & b_2 & \\ & & b_3 \end{pmatrix},
\]
where $|a_1| = |b_2| < 1$ and $|a_2| = |a_3| = |b_1| = |b_3|$. We now identify the affine chart $\{Z \neq 0\}$ of $\mathbb{P}(k^3) = \{[X:Y:Z] \> | \> X,Y,Z \in k\}$ with $k^2$ in the usual manner. This affine chart is preserved by $\Delta$ and the matrices $a$ and $b$ act on this affine chart via 
\[
\begin{pmatrix} \alpha_1 & \\ & \alpha_2 \end{pmatrix}, \begin{pmatrix} \beta_1 & \\ & \beta_2 \end{pmatrix},
\]
respectively, where $\alpha_i = \frac{a_i}{a_3}$ and $\beta_i = \frac{b_i}{b_3}$ for $i=1,2$. 
Up to replacing each of $a$ and~$b$ with its~$p(q-1)^{\text{st}}$ power, we can assume that each of $\alpha_1, \alpha_2, \beta_1, \beta_2$ is a multiple of a (possibly negative) power of $\pi^p$ by some element in $1 + \pi \mathfrak{m}$. 

Let $\mathcal{U} = (1+\pi\mathfrak{m}) \times (1+ \pi \mathfrak{m})\subset k^2$. For $x \in \mathcal{U}$, denote by $\mathcal{V}_x$ the union of $\mathcal{U}$ and all projective lines through $x$ that, when viewed as affine lines in $k^2$, have slope belonging to $\pi + \pi \mathfrak{m}$. We claim that $\mathcal{V}_x \cap \gamma \mathcal{U} = \emptyset$ for each $x \in \mathcal{U}$ and each nontrivial $\gamma \in \Delta$. 

We first explain in this paragraph how the claim completes the proof. Let~$\mathcal{W}$ be the subset of $\mathcal{F}$ consisting of all projective flags of the form $(x,L)$ where $x \in~\mathcal{U}$ and~$L$ is a line through $x$ of slope belonging to $\pi + \pi\mathfrak{m}$.  
Since $\mathcal{W}$ is a nonempty open subset of $\mathcal{F}$, there is a regular element $h \in \Lambda$ whose attracting flag $(x^+, L^+)$ and repelling flag $(x^-, L^-)$ are both contained in $\mathcal{W}$ (this follows again from \cite[Thm.~1(ii)]{MR1866442}, for instance).
Note that $\mathcal{V}_{x^\pm}$ is a neighborhood of $L^\pm$ in $\mathbb{P}(k^3)$. There is thus some positive integer $N_0$ such that for all $N \in \mathbb{Z}$ with $|N| \geq N_0$, we have ${h^N(\mathbb{P}(k^3) \smallsetminus \mathcal{V}_{x^\pm}) \subset \mathcal{U}}$. 
Setting $g:=h^{N_0}$, it now follows from a standard ping-pong argument that the subgroup $\langle \Delta, g \rangle < \Lambda$ decomposes as the free product $\Delta * \langle g \rangle$. Up to increasing~$N_0$, one can moreover ensure that the subgroup $\langle \Delta, g \rangle$ is undistorted in $\Lambda$; see Remark~\ref{undistorted}. 

We now prove the claim. Let $\gamma \in \Delta$ be nontrivial. It is clear that $\mathcal{U} \cap \gamma \mathcal{U} = \emptyset$. It thus suffices to show that for each $x,y \in \mathcal{U}$, the slope of the line joining $\gamma x$ to~$y$ is not in $\pi+ \pi\mathfrak{m}$. Write $x = (1+\lambda_1, 1+\lambda_2)$ and $y = (1+\mu_1, 1+\mu_2)$, where $\lambda_i, \mu_i \in \pi\mathfrak{m}$ for $i=1,2$, and let $(m,n) \in \mathbb{Z}^2 \smallsetminus \{(0,0)\}$. We want to show
\[
\frac{\alpha_2^m \beta_2^n(1+\lambda_2) - (1+\mu_2)}{\alpha_1^m \beta_1^n(1+\lambda_1) - (1+\mu_1)} \notin \pi + \pi\mathfrak{m}.
\]
Note that $\beta_1^n, \alpha_2^m \in 1 + \pi\mathfrak{m}$, so that, up to replacing each of the $\lambda_i$ with some other element of $\pi\mathfrak{m}$, it is enough to show
\[
\sigma: = \frac{\beta^n(1+\lambda_2) - (1+\mu_2)}{\alpha^m (1+\lambda_1) - (1+\mu_1)} \notin \pi + \pi\mathfrak{m},
\]
where $\alpha = \alpha_1$ and $\beta = \beta_2$. The latter is true since either $\sigma = \infty$ or $\mathrm{val}_\pi(\sigma) \neq 1$. 
\end{proof}

\begin{remark}\label{rem:lattice}
We justify that $\mathbb{Z}^2 * \mathbb{Z}$ does not embed as a lattice in the $k$-points of a $k$-group for any local field $k$. Indeed, suppose for a contradiction that $\mathbb{Z}^2 * \mathbb{Z}$ embeds as a lattice $\Gamma$ in ${\bf G}(k)$ for some $k$-group ${\bf G}$. Up to replacing $\Gamma$ with a finite-index subgroup, we may assume that ${\bf G}$ is $k$-connected. It follows from \cite[Thm.~5.2]{MR4057144} that~${\bf R}(k)$ is compact, where ${\bf R}$ denotes the radical of ${\bf G}$; since $\Gamma$ is torsion-free, up to replacing ${\bf G}$ with ${\bf G}/{\bf R}$, we may thus assume that ${\bf G}$ is semisimple. There are then almost $k$-simple $k$-subgroups ${\bf G}_1, \ldots, {\bf G}_n$ of ${\bf G}$ and an isogeny ${\bf G}_1 \times \cdots \times {\bf G}_n \rightarrow {\bf G}$. Assume the ${\bf G}_i$ are ordered such that ${\bf G}_i$ is $k$-anisotropic precisely for $i > m$, let ${\bf H} = {\bf G}_1 \times \cdots \times {\bf G}_m$, and let $\Gamma'$ be the lattice in ${\bf H}(k)$ obtained by first taking the pre-image of $\Gamma$ in ${\bf G}_1(k) \times \cdots \times {\bf G}_n(k)$, passing to a torsion-free finite-index subgroup, and then projecting to ${\bf H}(k)$. Since no finite-index subgroup of $\Gamma'$ splits as a direct product of two nontrivial groups, we must have that $\Gamma'$ is an irreducible lattice in~${\bf H}(k)$. Thus, by the Margulis normal subgroup theorem \cite[Thm.~IX.5.6]{MR1090825}, we must have that ${\bf H}$ is almost $k$-simple and of $k$-rank $1$ (for instance, since $\Gamma'$ has infinite abelianization). Since $\Gamma'$ is not Gromov-hyperbolic, we have that $\Gamma'$ cannot be cocompact in ${\bf H}(k)$, nor is it possible that $k$ is archimedean and ${\bf H}(k)$ is locally isomorphic to $\mathrm{SL}_2(\mathbb{R})$ as a real Lie group. In all remaining cases where~$k$ is archimedean, it follows from Prasad rigidity \cite{MR399351} that $\Gamma'$ cannot be a lattice in~${\bf H}(k)$, since for instance one can embed $\Gamma'$ as an infinite-covolume discrete subgroup of ${\bf H}(k)$. We conclude that $k$ is nonarchimedean, but then $\Gamma'$ cannot be finitely generated (see \cite[Cor.~3.13]{MR1794898}), a contradiction.
\end{remark}

\section{Undistorted free products}
Recall that a map $f: \mathcal{Y} \rightarrow \mathcal{X}$ between two metric spaces $(\mathcal{Y}, d_\mathcal{Y})$ and $(\mathcal{X}, d_\mathcal{X})$ is a {\em quasi-isometric embedding} if there is a constant $C > 1$ such that for all $y_1, y_2 \in \mathcal{Y}$,
\[
C \cdot d_\mathcal{Y}(y_1,y_2)+C \geq d_\mathcal{X}(f(y_1),f(y_2)) \geq \frac{1}{C}  d_\mathcal{Y}(y_1,y_2) - C.
\]

Fix an arbitrary local field $k$, and consider the $\ell^\infty$-norm $||\cdot||$ on $k^d$ given by $||\sum_{i}a_ie_i||=\sup_{i}|a_i|$, where $(e_1,\ldots,e_n)$ is the canonical basis of $k^d$. For $g\in \mathrm{GL}_d(k)$, the operator norm is defined as $||g||=\sup_{v\neq 0}\frac{||gv||}{||v||}$. Denote by $\mu:\mathrm{SL}_d(k)\rightarrow \mathbb{R}^d$ the Cartan projection. For more background on the definition of $\mu$ we refer the reader to \cite{MR514561} in the archimedean case and to \cite{MR327923} in the nonarchimedean case.

Let $\Gamma<\mathrm{SL}_d(k)$ be a finitely generated subgroup. Fix a word metric $|\cdot|:\Gamma \rightarrow \mathbb{R}_{+}$ on $\Gamma$ given by a finite generating set of $\Gamma$. Let $\mathcal{X}_d$ be the symmetric space or Bruhat--Tits building associated to $\mathrm{SL}_d(k)$. The subgroup $\Gamma<\mathrm{SL}_d(k)$ is said to be {\em quasi-isometrically embedded} if, for some (equivalently, any) $x \in \mathcal{X}_d$, the map $\Gamma \rightarrow \mathcal{X}_d$ given by $\gamma \mapsto \gamma x$ is a quasi-isometric embedding.\footnote{This condition does not depend on the choice of word metric $|\cdot|$.}
This condition is equivalent to the existence of constants 
$C,a>0$ such that for all $\gamma \in \Gamma$,
$$||\mu(\gamma)||\geq a|\gamma|-C,$$ where $||\mu(\gamma)||$ denotes the Euclidean norm of $\mu(\gamma)$. The latter condition is in turn equivalent to the existence of constants $\alpha,c>0$ such that for all $\gamma \in \Gamma$, $$||\gamma||\cdot ||\gamma^{-1}||\geq e^{\alpha |\gamma|-c}.$$

If $\Gamma$ is a quasi-isometrically embedded subgroup of $\mathrm{SL}_d(k)$ and $\Lambda$ is some finitely generated subgroup of $\mathrm{SL}_d(k)$ containing $\Gamma$, one has that $\Gamma$ is {\em undistorted} in $\Lambda$, that is, that the inclusion of $\Gamma$ in $\Lambda$ is a quasi-isometric embedding. Though we will not be needing this, we remark that it follows from a result of Lubotzky--Mozes--Raghunathan \cite{MR1828742} that if $\Lambda$ is a lattice in $\mathrm{SL}_d(k)$ and $d\geq 3$ then a finitely generated subgroup $\Gamma < \Lambda$ is undistorted in $\Lambda$ if and only if $\Gamma$ is quasi-isometrically embedded in $\mathrm{SL}_d(k)$. Recall also that if~$\Lambda$ is a lattice in $\mathrm{SL}_d(k)$ for $d \geq 3$, then $\Lambda$ possesses Kazhdan's property (T) and is thus finitely generated.

The following proposition is folklore.

\begin{proposition}\label{QI} Let $C_1,C_2\subset \mathbb{P}(k^d)$ be nonempty disjoint subsets, and $\Gamma_1,\Gamma_2$ be finitely generated infinite subgroups of $\mathrm{SL}_d(k)$ satisfying:
\begin{enumerate}[label=(\roman*), topsep=0pt,itemsep=-1ex,partopsep=1ex,parsep=1ex]
\item\label{pingpongsets} $\gamma_i C_j\subset C_i$ for $i\neq j$ and $\gamma_i \in \Gamma_i \smallsetminus \{1\}$, and
\item\label{uniformity} there exists $\varepsilon > 0$ such that $||\gamma_i v||\geq \varepsilon ||\gamma ||\cdot ||v||$ for every $[v]\in C_j$, $j\neq i$, and $\gamma_i \in \Gamma_i$.
\end{enumerate}
\medskip

\noindent Suppose further that $\Gamma_1,\Gamma_2<\mathrm{SL}_d(k)$ are quasi-isometrically embedded. Then there is a finite-index subgroup $\Gamma_2'<\Gamma_2$ such that $\langle \Gamma_1,\Gamma_2'\rangle<\mathrm{SL}_d(k)$ is quasi-isometrically embedded and decomposes as $\Gamma_1\ast \Gamma_2'$.\end{proposition}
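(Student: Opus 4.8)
The plan is to separate the two assertions, since they are of very different depth. For the free-product decomposition I would invoke the classical two-subgroup ping-pong (table-tennis) lemma: condition \ref{pingpongsets} says each nontrivial element of $\Gamma_i$ carries $C_j$ into $C_i$ for $j \neq i$, the sets $C_1, C_2$ are nonempty and disjoint, and $\Gamma_1, \Gamma_2$ are infinite, so the lemma yields $\langle \Gamma_1, \Gamma_2\rangle = \Gamma_1 * \Gamma_2$. This persists with $\Gamma_2$ replaced by any subgroup $\Gamma_2' \leq \Gamma_2$, because a nontrivial element of $\Gamma_2'$ is nontrivial in $\Gamma_2$ and hence still satisfies \ref{pingpongsets} with the same sets. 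Thus the free-product conclusion costs nothing and holds for every finite-index $\Gamma_2'$; all the work goes into arranging the quasi-isometric embedding, and it is this that will dictate how deep a subgroup $\Gamma_2'$ we must take.

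For the embedding I would use the criterion recalled above: $\langle \Gamma_1, \Gamma_2'\rangle$ is quasi-isometrically embedded if and only if there are $\alpha, c > 0$ with $\|\gamma\|\,\|\gamma^{-1}\| \geq e^{\alpha|\gamma| - c}$ for all $\gamma$, where $|\cdot|$ is the word metric for $S_1 \cup S_2'$, with $S_i$ generating the relevant factor. Writing a nontrivial element as a reduced word $\gamma = w_1 \cdots w_k$ with syllables $w_i$ alternating between $\Gamma_1 \smallsetminus \{1\}$ and $\Gamma_2' \smallsetminus \{1\}$, one has $|\gamma| = \sum_i |w_i|$, the sum of the factor word lengths. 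The engine is a dynamical lower bound on the operator norm: choosing a vector $v$ with $[v] \in C_j$ for $j$ opposite to the factor of the last syllable $w_k$, conditions \ref{pingpongsets} and \ref{uniformity} feed into each other, since \ref{uniformity} gives $\|w_k v\| \geq \varepsilon \|w_k\|\,\|v\|$ while \ref{pingpongsets} places $[w_k v]$ in the set on which the next syllable expands. Inductively this gives $\|\gamma v\| \geq \varepsilon^k \big(\prod_i \|w_i\|\big)\|v\|$, hence $\|\gamma\| \geq \varepsilon^k \prod_i \|w_i\|$; the identical estimate applied to $\gamma^{-1} = w_k^{-1}\cdots w_1^{-1}$, whose syllables lie in the same factors, yields $\|\gamma\|\,\|\gamma^{-1}\| \geq \varepsilon^{2k}\prod_i \big(\|w_i\|\,\|w_i^{-1}\|\big)$.

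Next I would feed in that $\Gamma_1$ and $\Gamma_2'$ are themselves quasi-isometrically embedded, the latter because it has finite index in the quasi-isometrically embedded $\Gamma_2$, so each syllable obeys $\|w_i\|\,\|w_i^{-1}\| \geq e^{\alpha_0 |w_i| - c_0}$ for common constants $\alpha_0, c_0 > 0$. Taking logarithms produces $\log(\|\gamma\|\,\|\gamma^{-1}\|) \geq \alpha_0 |\gamma| - (c_0 - 2\log\varepsilon)\,k$, and here lies the one real obstacle: the error is proportional to the syllable count $k$, which can be comparable to $|\gamma|$, so this bound alone does not give the embedding. This is exactly what passing to a deep finite-index subgroup repairs. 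Since $\Gamma_2$ is quasi-isometrically embedded it is discrete, so for any $M$ only finitely many $t \in \Gamma_2$ satisfy $\|t\|\,\|t^{-1}\| \leq M$; and $\Gamma_2$, being finitely generated and linear, is residually finite by Malcev's theorem, so I can choose a finite-index normal $\Gamma_2' \triangleleft \Gamma_2$ avoiding this finite set, i.e.\ with $\|t\|\,\|t^{-1}\| \geq M$ for every nontrivial $t \in \Gamma_2'$.

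With such a $\Gamma_2'$ the surplus from the $\Gamma_2'$-syllables absorbs the linear-in-$k$ deficit. Because the syllables alternate, at least $\lfloor k/2 \rfloor$ of them lie in $\Gamma_2'$ and each contributes at least $\log M$ to $\log(\|\gamma\|\,\|\gamma^{-1}\|)$; splitting each such syllable's bound as an average of its quasi-isometric bound $\alpha_0|w_i| - c_0$ and its floor bound $\log M$, and using that the number of $\Gamma_1$-syllables exceeds that of $\Gamma_2'$-syllables by at most one, one checks that for $M$ large (depending only on $\alpha_0, c_0, \varepsilon$) the coefficient of $k$ becomes nonnegative up to a single additive constant. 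This leaves $\log(\|\gamma\|\,\|\gamma^{-1}\|) \geq \alpha|\gamma| - c$ for suitable $\alpha, c > 0$, the desired criterion. I expect the bookkeeping in this final step, namely choosing $M$ so that the per-syllable reserve dominates both the constants $c_0$ and the expansion loss recorded by $\varepsilon^{2k}$, to be the only delicate point, and it is purely a matter of arranging constants once the residual-finiteness input is in hand.
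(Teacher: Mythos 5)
Your proposal is correct and follows essentially the same route as the paper: the same syllable-by-syllable norm estimate $\|\gamma\|\,\|\gamma^{-1}\| \geq \varepsilon^{2k}\prod_i\|w_i\|\,\|w_i^{-1}\|$ from conditions \ref{pingpongsets} and \ref{uniformity}, followed by passing to a finite-index subgroup of $\Gamma_2$ whose nontrivial elements are uniformly large enough to absorb the per-syllable losses from $\varepsilon$ and the additive constants. The only (harmless) differences are that you obtain injectivity from the classical ping-pong lemma rather than as a consequence of the quasi-isometric estimate, and that you spell out the residual-finiteness mechanism the paper leaves implicit.
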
 

\begin{proof} We fix a word metric $|\cdot|$ on $\Gamma_i$ induced by some finite generating subset. By assumption, there are $c,\alpha_1>0$ such that for all $\gamma \in \Gamma_1$, \begin{align}\label{qie1}||\gamma||\cdot ||\gamma^{-1}||\geq e^{\alpha_1|\gamma|-c}.\end{align} Given the constants $c,\varepsilon>0$, we may choose $\alpha_2>0$ and a finite-index subgroup $\Gamma_2'<\Gamma_2$ with the property that, for all $\delta \in \Gamma_2' \smallsetminus \{1\}$, \begin{align}\label{qie2}||\delta||\cdot ||\delta^{-1}||\geq \varepsilon^{-4}e^{c}e^{\alpha_2|\delta|}.\end{align}

Now let $n \geq 2$ and suppose we have elements $\gamma_1, \ldots, \gamma_n$ belonging alternatingly to $\Gamma_1 \smallsetminus \{1\}$ and $\Gamma_2' \smallsetminus \{1\}$, and set $g:=\gamma_1\cdots \gamma_n$.
 Let $j\in \{1,2\}$ be such that $\gamma_n\in \Gamma_i$, let $j=3-i$, and choose $[v]\in C_j$. By conditions \ref{pingpongsets} and \ref{uniformity}, we have that $$||gv||\geq \varepsilon^n  ||\gamma_1||\cdots ||\gamma_n||\cdot ||v||,$$ hence $$||g||\geq  \varepsilon^n  ||\gamma_1||\cdots ||\gamma_n||.$$ By arguing similarly for $g^{-1}=\gamma_n^{-1}\cdots \gamma_1^{-1}$, we have that $$||g^{-1}||\geq  \varepsilon^n  ||\gamma_1^{-1}||\cdots ||\gamma_n^{-1}||.$$ We thus obtain \begin{align}\label{qie3}||g||\cdot ||g^{-1}||\geq  \varepsilon^{2n} \big(||\gamma_1||\cdot||\gamma_1^{-1}||\big)\cdots \big( ||\gamma_n||\cdot  ||\gamma_n^{-1}||\big).\end{align}

Since we assumed $\gamma_1, \ldots, \gamma_n$ belong alternatingly to $\Gamma_1 \smallsetminus \{1\}$ and $\Gamma_2' \smallsetminus \{1\}$, by using the estimates (\ref{qie1}), (\ref{qie2}), and (\ref{qie3}) and setting $\alpha:=\min\{\alpha_1,\alpha_2\}>0$, we obtain the bound \begin{align*}||g||\cdot ||g^{-1}||&\geq \varepsilon^{2n} e^{-c(\frac{n}{2}+1)}(\varepsilon^{-4}e^{c})^{\frac{n}{2}-1} e^{\alpha \sum_{i=1}^{n}|\gamma_i|}\\ &\geq  \varepsilon^{4} e^{-2c}e^{\alpha \sum_{i=1}^{n}|\gamma_i|}.\end{align*} This shows that the natural map $\Gamma_1 * \Gamma_2' \rightarrow \langle \Gamma_1,\Gamma_2'\rangle<\mathrm{SL}_d(k)$ is a quasi-isometric embedding, and is in particular injective (since $\Gamma_1 * \Gamma_2'$ has no nontrivial finite normal subgroups).\end{proof}

\begin{remark}\label{undistorted}
We explain how Proposition \ref{QI} implies that, in the proof of Theorem~\ref{thm:main}, we may choose $R > 0$ such that $\langle \Delta, g^R \rangle$ is undistorted and decomposes as ${\Delta * \langle g^R \rangle}$. Indeed, it is enough to show that for some $R' > 0$, the subgroups $\Gamma_1 := \Delta$, $\Gamma_2:= \langle g^{R'} \rangle$ of $\mathrm{SL}_3(k)$, and the subsets $C_1 := \mathbb{P}(k^3) \smallsetminus \mathcal{V}_{x^\pm}$, $C_2 := \mathcal{U}$ of $\mathbb{P}(k^3)$ satisfy the conditions of Proposition \ref{QI}. To that end, note first that, since $\mathcal{U}$ is a compact subset of~$\mathbb{P}(k^3)$ contained in the complement of the hyperplanes $\{X=0\}$, $\{Y=0\}$, and $\{Z=0\}$, there is some $\theta > 0$ such that for any unit vector $v = (v_1, v_2, v_3) \in k^3$ satisfying $[v] \in~\mathcal{U}$, we have $|v_i| \geq \theta$ for $i = 1,2, 3$. We then have for any $\gamma = \mathrm{diag}(a_1, a_2, a_3) \in~\Delta$ that   $$||\gamma v||=||(a_1v_1, a_2v_2, a_3v_3)||\geq \max_{1\leq i\leq 3}\theta |a_i| = \theta||\gamma||.$$ 

Since $\mathbb{P}(k^3) \smallsetminus \mathcal{V}_{x^\pm}$ is a compact subset of the complement of $L^+ \cup L^-$ in $\mathbb{P}(k^3)$, one can check (by diagonalizing $g$, for instance) that there exist $\theta' > 0$ and an integer $R' > 0$ such that $\|g^r v\| \geq \theta' \|g^r \| \cdot \|v\|$ for all $[v] \in \mathbb{P}(k^3) \smallsetminus \mathcal{V}_{x^\pm}$ and $r \in \mathbb{Z}$ with $|r| \geq R'$. One may now take $\epsilon:= \min\{\theta, \theta'\}$ in the statement of Proposition~\ref{QI}.

\end{remark}

\subsection*{Acknowledgements} S.D. was supported by the Huawei Young Talents Program. S.D. thanks Anna Wienhard for her invitation to the MPI MiS (Leipzig) in the fall of 2024, where this work was discussed. S.D. and D.K. also thank the IH\'ES for excellent working conditions. We thank Nic Brody for useful conversations and for his comments on an earlier draft of this note. We also thank an anonymous referee for instructive comments.

\bibliography{nonarchimedeanbib}{}
\bibliographystyle{siam}

\end{document}